\newtheorem{theorem}{Theorem}[section]
\newtheorem{prop}[theorem]{Proposition}
\newenvironment{proof*}{\vskip 2mm\noindent {}}{\hfill $\Box$ \vskip 2mm}
\numberwithin{equation}{section}
\newcommand{\C}{{\mathbb{C}}}
\newcommand{\D}{{\mathbb{D}}}
\newcommand{\eit}{{e^{i\theta}}}
\newcommand{\eitu}{{e^{i\theta_1}}}
\newcommand{\eitd}{{e^{i\theta_2}}}
\begin{document}

\title[Non cyclic in the bidisc]
{Non cyclic functions in the Hardy space of the bidisc with arbitrary decrease}

\author{Xavier Massaneda}
\address{Facultat de Matem\`atiques, Universitat de Barcelona, Catalonia}
\email{xavier.massaneda@ub.edu}

\author{Pascal J. Thomas}
\address{Universit\'e de Toulouse, UPS, INSA, UT1, UTM, Institut de Math\'ematiques de Toulouse, F-31062 Toulouse, France}
\email{pascal.thomas@math.univ-toulouse.fr}

\keywords{Hardy spaces, polydisk, cyclic vectors}
\subjclass[2010]{32A35, 30H10, 46E15}

\thanks{This work was carried out while both autors were participating in the Semester in Complex Analysis and Spectral Theory of the Centre de Recerca Matem\`atica at the Universitat Aut\`onoma de Barcelona.}

\begin{abstract}
We construct an example to show that no condition of slow decrease 
of the modulus of a function is sufficient to make it cyclic in the Hardy space
of the bidisc.  This is similar to what is well known in the case of the Hardy space 
of the disc,
but in contrast to the case of the Bergman space of the disc. 
\end{abstract}

\maketitle

\section{Background}

A cyclic vector $f$ for a given operator $\phi$ from a topological vector space $X$
to itself is one such that $ {\rm Span}\{\phi^n(f), n \in \mathbb Z_+\}$ is dense
in $X$.  In the classical framework of the Hardy space $H^2(\mathbb D)$ on the unit 
disk with the shift operator $\phi(f)(z):= zf(z)$, a cyclic function 
is one that verifies 
that $\C[Z]f$ is dense in $H^2(\mathbb D)$, where $\C[Z]$ denotes the set of 
holomorphic polynomials.  The characterization of those as outer functions goes back 
to Beurling, see \cite{Ga} or \cite{Ko} for details. For any $f \in H^2(\D)$,
$$
f (\zeta) := B(\zeta) \exp 
\left(  \int_0^{2\pi} \frac{\eit +\zeta}{\eit -\zeta} \ \left( g(\theta) d\theta-d\mu (\theta)\right)\right),
$$
where $B$ is a Blaschke product, $e^g \in L^2(\partial \D)$ and $\mu$ is a positive singular measure.  The factor containing only $\mu$ is called a singular inner function.  
The function $f$
is called \emph{outer} when $\mu=0$ and $B=1$ (or, equivalently, $\log|f|$ is
the Poisson integral of its boundary values). 

We are interested in the situation in $H^2(\D^2)$, where 
$\D^2 := \D \times \D \subset \C^2$, and 
$$
H^2(\D^2) := \left\{ f \in {\rm Hol}(\D^2) : \|f\|_2^2:=\sup_{r<1}
\int_0^{2\pi} \int_0^{2\pi} |f(r \eitu, r \eitd)|^2 d\theta_1 d\theta_2 < \infty \right\}.
$$
A \emph{cyclic} function is $f\in H^2(\D^2)$ such that $\C[Z_1,Z_2]f$ is dense in $H^2(\D^2)$.

Rudin \cite{Ru} proved that
the analogous equivalence between being outer (in an appropriate sense) and cyclic
fails in the case of the polydisk. A lot of work on analogous 
problems was later carried out by Hedenmalm, see for instance \cite{He}.

Since functions with the same modulus are cyclic or not
simultaneously --- because if $f_1$ is cyclic
and $|f_2(z)| \ge |f_1(z)|$, for any $z \in \D^2$, 
then $f_2$ is cyclic --- it is natural to look for conditions
on the size of $f$ that would be necessary or sufficient for cyclicity. 
In general, $|f|$ cannot be allowed to vanish, nor decrease too fast near the
boundary, in order for $f$ to be cyclic. 

One type of necessary condition can be obtained by restricting functions to the
diagonal.  Rudin \cite[p. 53]{Ru} noticed that the map 
$f \mapsto Rf(\zeta):=f(\zeta,\zeta)$ is bounded and onto from $H^2(\D^2)$
to the Bergman space $A^2(\D):= {\rm Hol}(\D) \cap L^2(\D)$ (this is a quite general
phenomenon, see \cite{HoOb} for instance).  Thus if $f$ is cyclic in $H^2(\D^2)$,
then $Rf$ is cyclic in $A^2(\D)$, that is to say, $\C[Z]Rf$ is dense in $A^2(\D)$.

Borichev \cite{Bo} proved that if $f\in A^2(\D)$ and
$|f|$ decreases slowly enough, then $f$ is cyclic in $A^2(\D)$.  More precisely,
we call \emph{weight function} a non increasing function $v$ from $(0,1]$
to $(0,\infty)$ such that $\lim_{t\to 0}v(t)=\infty$ and $v(t^2) \le C v(t)$ for some $C>0$.  The following is a special case of \cite[Theorem 2]{Bo}:
\begin{theorem}
If $|f(\zeta)| \ge \exp(-v(1-|\zeta|))$ for a weight function $v$ such that
$$
\int_0 \frac{v(t)^2}{t(\ln t)^2} dt <\infty,
$$
then $f$ is cyclic in $A^2(\D)$. 
\end{theorem}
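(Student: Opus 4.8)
The plan is to prove that $f$ is cyclic by showing that the constant function $1$ lies in the closure $[f]:=\overline{\C[Z]f}$ taken in $A^2(\D)$. Since $[f]$ is a closed subspace invariant under multiplication by $z$, once $1\in[f]$ we obtain $z^n\in[f]$ for every $n$, hence every polynomial, hence $[f]=A^2(\D)$. So everything reduces to producing polynomials $p_n$ with $\|1-p_nf\|_{A^2}\to 0$. Before constructing these, I would first pass to a convenient model function by means of the monotonicity principle recalled in the introduction. Because $|f(\zeta)|\ge\exp(-v(1-|\zeta|))>0$, the function $f$ has no zeros. It therefore suffices to exhibit a single cyclic $F\in A^2(\D)$ with
$$
|F(\zeta)|\le\exp(-v(1-|\zeta|)),\qquad \zeta\in\D ;
$$
indeed then $F/f$ is holomorphic with $|F/f|\le 1$, so $F/f\in H^\infty$, and writing $F=(F/f)\,f$ and approximating the bounded multiplier $F/f$ by dilations and their Taylor polynomials shows $F\in[f]$, whence $A^2(\D)=[F]\subseteq[f]$.

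To build the model function I would take $F=\exp(-\Phi)$ with $\Phi\in{\rm Hol}(\D)$ satisfying a two-sided estimate $v(1-|z|)\le\operatorname{Re}\Phi(z)\le C\,v(1-|z|)$. The lower bound yields $|F|\le e^{-v(1-|z|)}$; the upper bound keeps the growth of $1/F=e^{\Phi}$ under control (which is what makes polynomial approximation feasible), and together with $v\to\infty$ at the boundary it guarantees $F\in A^2(\D)$. A purely radial real part cannot be harmonic, so $\Phi$ must oscillate in the angular variable, and the natural device is to superpose Herglotz–Poisson kernels,
$$
\Phi(z)=\sum_{j}c_j\,\frac{a_j+z}{a_j-z},
$$
with the points $a_j$ distributed on circles $|z|=1-t_k$ and the number of points and the masses $c_j$ on the $k$-th circle dictated by $v(t_k)$. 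The real part is then a positive sum of Poisson kernels, and by placing $\sim 1/t_k$ roughly equispaced points with suitably normalized masses one arranges $\operatorname{Re}\Phi(z)\asymp v(1-|z|)$; here the doubling hypothesis $v(t^2)\le C\,v(t)$ is what makes the contributions of consecutive dyadic scales comparable and summable.

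Finally I would approximate $1/F=e^{\Phi}$ by Taylor polynomials. Writing $e^{\Phi}=\sum_k a_kz^k$ and letting $p_N$ be the degree-$N$ partial sum, one has $1-p_NF=F\,(e^{\Phi}-p_N)=F\sum_{k>N}a_kz^k$, so that
$$
\|1-p_NF\|_{A^2}^2=\int_{\D}|F(z)|^2\,\Big|\sum_{k>N}a_kz^k\Big|^2\,dA(z).
$$
I would let the degree depend on the distance to the boundary, splitting $\D$ into the dyadic collars $1-2^{-k}\le|z|<1-2^{-k-1}$ and using a different truncation on each, estimating the Taylor tail on the circle $|z|=1-t$ by Cauchy's inequalities in terms of the growth $e^{v(s)}$ of $e^{\Phi}$ optimized over the radius $1-s$. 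On a circle at distance $t$ the size of $1/F$ is $e^{v(t)}$ and its angular oscillation carries frequencies up to $\sim 1/t$, so reproducing it costs a degree and leaves an error whose $A^2$-contribution from that collar turns out to be comparable to $\dfrac{v(t)^2}{t(\ln t)^2}\,dt$; convergence of the whole scheme then comes down exactly to the finiteness of $\int_0\frac{v(t)^2}{t(\ln t)^2}\,dt$.

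The step I expect to be the main obstacle is precisely this last calibration: constructing $\Phi$ so that one simultaneously has a sharp two-sided bound on $\operatorname{Re}\Phi$ and tight enough control of the Taylor coefficients $a_k$ of $e^{\Phi}$ for the per-collar estimate to match the integrand $v(t)^2/\bigl(t(\ln t)^2\bigr)$. Matching the geometry of the kernel superposition to $v$ through the doubling condition, and then running the radius-dependent truncation so that both the bulk error (where $p_N$ already approximates $e^{\Phi}$ well) and the boundary error tend to $0$, is where the genuine work lies.
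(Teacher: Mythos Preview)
The paper does not contain a proof of this theorem. It is quoted in the background section as ``a special case of \cite[Theorem 2]{Bo}'' (Borichev's paper), and no argument whatsoever is supplied; the authors only use the statement as motivation for asking whether an analogous size condition could force cyclicity in $H^2(\D^2)$. So there is no ``paper's own proof'' to compare your proposal against.

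As for the proposal itself: what you have written is an outline rather than a proof, and you say so yourself in the last paragraph. The reduction step (replace $f$ by a model $F$ with $|F|\le e^{-v(1-|\cdot|)}\le|f|$ via the $H^\infty$-multiplier argument) is sound and is indeed how one typically begins. The construction of $\Phi$ as a sum of Poisson--Herglotz kernels with masses placed on dyadic circles, calibrated using the doubling condition on $v$, is also the right idea and is essentially what Borichev does. However, the heart of the matter --- the estimate that ties the Taylor-tail error on each collar to the integrand $v(t)^2/(t(\ln t)^2)$ --- is asserted rather than carried out, and that is exactly where the theorem lives. In particular, the claim that ``its angular oscillation carries frequencies up to $\sim 1/t$'' and that this yields the stated per-collar contribution needs a genuine computation (control of the coefficients of $e^{\Phi}$, not just of $\Phi$), which is nontrivial because exponentiating mixes all frequencies. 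Until that step is written out, the proposal remains a plausible plan but not a proof.
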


It has been known for a long time that no sufficient condition of this 
type can hold for $H^2(\D)$; indeed, no non-trivial singular inner function
may be cyclic, but one can construct them with arbitrarily slow decrease.
On the other hand, in view of the sufficient condition in the Bergman
space case, one may wonder whether a similar sufficient condition
could hold for $H^2(\D^2)$.  The purpose of this note is to show
that it is not the case, as the example below shows.

We thank A. A. Aleksandrov for stimulating conversations on this topic.

\section{The example}

For $(z_1,z_2) \in \D^2$, let $\delta (z) := \max (1-|z_1|, 1-|z_2|)$ 
(this is comparable to the distance to the distinguished boundary of the bidisk
$(\partial \D)^2$).

\begin{prop}
For any weight function $v$, there exists $f \in H^2 (\D^2)$ 
such that $\log |f(z)| \ge -v(\delta(z))$, but $f$ is not cyclic
for $H^2 (\D^2)$.
\end{prop}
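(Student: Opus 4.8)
The plan is to build $f$ as a singular inner-type function on the bidisc whose boundary singular measure is concentrated so as to control the modulus from below by the prescribed weight, while retaining a nontrivial singular inner factor that obstructs cyclicity. Since the crucial necessary condition comes from restriction to the diagonal, $Rf(\zeta)=f(\zeta,\zeta)$, I would aim to make $Rf$ fail to be cyclic in $A^2(\D)$; by Rudin's observation this already forces $f$ to be non-cyclic in $H^2(\D^2)$. So the first step is to reduce the problem: it suffices to produce $f\in H^2(\D^2)$ with $\log|f(z)|\ge -v(\delta(z))$ whose diagonal restriction $Rf$ vanishes to infinite order, or more precisely is divisible by a singular inner factor, at some boundary point of $\D$.

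First I would fix a target point, say $\zeta^*=1\in\partial\D$, and design $f$ so that its modulus decays only along the single two-dimensional ``slice'' approaching $(1,1)$, but along a direction where $\delta(z)$ stays comparatively large. The key geometric observation is that $\delta(z)=\max(1-|z_1|,1-|z_2|)$ is governed by whichever coordinate is \emph{farther} from the boundary; so if I force $f$ to be tiny only where \emph{both} coordinates are simultaneously close to $1$, I can make $f$ decay rapidly there while keeping $-v(\delta(z))$ a mild lower bound, because $\delta$ only sees the larger of the two quantities $1-|z_j|$. Concretely I would look for a function of the form $f(z_1,z_2)=\exp\bigl(-\Psi(z_1,z_2)\bigr)$ with $\operatorname{Re}\Psi\ge 0$, where $\Psi$ is a holomorphic function blowing up as $(z_1,z_2)\to(1,1)$ but controlled elsewhere; a natural candidate is a combination such as $\Psi(z)=\varphi\!\left(\frac{1+z_1}{1-z_1}+\frac{1+z_2}{1-z_2}\right)$ with an appropriate increasing $\varphi$, tuned against $v$.

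The main technical steps would then be: (i) verify $f\in H^2(\D^2)$, which reduces to an $L^2$ estimate on the boundary torus where the singularity is integrable because it is concentrated near the single point $(1,1)$; (ii) establish the pointwise lower bound $\log|f(z)|=-\operatorname{Re}\Psi(z)\ge -v(\delta(z))$, which is where the interplay between the two coordinates and the definition of $\delta$ must be exploited, using the weight doubling condition $v(t^2)\le Cv(t)$ to absorb constants; and (iii) prove non-cyclicity. For the last step I would show that $Rf(\zeta)=\exp\bigl(-\varphi(2\tfrac{1+\zeta}{1-\zeta})\bigr)$ carries a genuine singular inner factor at $\zeta=1$ (its modulus tends to $0$ faster than any power of $1-|\zeta|$, so $\log|Rf|$ is not a Poisson integral), whence $Rf$ is not outer and not cyclic in $A^2(\D)$ by a Borichev-type necessity, or more directly by comparing with a singular inner function that is known to be non-cyclic.

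The hard part will be the simultaneous calibration in step (ii): I must choose $\varphi$ growing fast enough that the diagonal restriction acquires a singular inner factor strong enough to defeat cyclicity, yet slowly enough — and with the decay genuinely two-dimensional — that $\operatorname{Re}\Psi(z)$ stays below $v(\delta(z))$ off the diagonal, where $\delta(z)$ can be much larger than on the diagonal. The tension is precisely that $\delta$ depends only on $\max(1-|z_1|,1-|z_2|)$ while the singularity of $\Psi$ wants to involve both coordinates; making these compatible for an \emph{arbitrary} prescribed weight $v$, with no lower bound on its growth, is the crux, and I expect it to require choosing the rate of blow-up of $\varphi$ adapted pointwise to $v$ rather than a fixed explicit formula.
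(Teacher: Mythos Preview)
Your plan breaks down at step (iii), and the gap is not a detail but the heart of the matter. You propose to deduce non-cyclicity of $f$ in $H^2(\D^2)$ from non-cyclicity of $Rf$ in $A^2(\D)$. But restrict your lower bound to the diagonal: there $\delta(\zeta,\zeta)=1-|\zeta|$, so you are forcing $\log|Rf(\zeta)|\ge -v(1-|\zeta|)$. Now the proposition must hold for \emph{every} weight function $v$, in particular for any $v$ satisfying Borichev's integral condition $\int_0 v(t)^2\,t^{-1}(\log t)^{-2}\,dt<\infty$. For such $v$, Borichev's theorem (quoted in the paper as a \emph{sufficient} condition, not a necessary one) guarantees that $Rf$ \emph{is} cyclic in $A^2(\D)$. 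So your route to non-cyclicity is blocked precisely by the theorem you invoke. The phrase ``Borichev-type necessity'' is a misconception: having a singular inner factor in the $H^2$ sense, or failing to be outer, does \emph{not} prevent cyclicity in $A^2(\D)$; point-mass singular inner functions such as $\exp\bigl(-c\tfrac{1+\zeta}{1-\zeta}\bigr)$ are in fact cyclic in $A^2(\D)$. Your geometric remark that $\delta$ only sees the larger of $1-|z_j|$ does not help on the diagonal, where the two quantities coincide.

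The paper avoids this trap by never passing through $A^2(\D)$. It takes $f(z_1,z_2)=f_0(z_1z_2)$ for a one-variable singular inner function $f_0$ built from a measure with $\mu_v(I)\le c_0|I|\,v(|I|)$; the lower bound then follows from $1-|z_1z_2|\le (1-|z_1|)+(1-|z_2|)$ and the doubling property of $v$. For non-cyclicity, the argument is a direct change of variables on the torus: if $P_n f\to 1$ in $H^2(\D^2)$, then along almost every slice $z_2=e^{i\alpha}z_1$ one gets polynomials $q_k(\zeta)=P_{n_k}(\zeta,e^{i\alpha}\zeta)$ with $q_k g_\alpha\to 1$ in $H^2(\D)$, where $g_\alpha(\zeta)=f_0(e^{i\alpha}\zeta^2)$ is again singular inner. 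This contradicts Beurling's theorem in $H^2(\D)$, where non-trivial singular inner functions are \emph{never} cyclic, regardless of their rate of decrease. The reduction is to $H^2(\D)$, not to $A^2(\D)$, and that is exactly what makes the argument go through for arbitrary $v$.
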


\begin{proof}
Let $\mu_v$ be a singular positive measure on $\partial \D$ such
that $\mu_v (I) \le c_0 |I|  v(|I|)$, for $c_0$ small enough, then the singular function 
$$
f_0 (\zeta) := \exp 
\left( - \int_0^{2\pi} \frac{\eit +\zeta}{\eit -\zeta} \ d\mu_v (\theta)\right)
$$
is not cyclic for $H^2(\D)$ and verifies 
$\log |f_0(z)| \ge -  v(1-|z|)$.

Now set $f(z):= f_0(z_1z_2)$.  Since 
$$
\max (1-|z_1|, 1-|z_2|) \le 1- |z_1z_2| \le 1-|z_1|+ 1-|z_2|,
$$
an appropriate modification of the constant $c_0$ yields
$\log |f(z)| \ge -v(\delta(z))$.

If $f_0(\zeta)= \sum_k \alpha_k \zeta^k$, then 
$f(z) =  \sum_k \alpha_k z_1^k z_2^k = \sum_{j,k} a_{j,k} z_1^j z_2^k$, 
with $\sum_{j,k} |a_{j,k}|^2 = \sum_k |\alpha_k|^2 < \infty$, so
$f\in H^2(\D^2)$ (or in this case we just might notice we have bounded functions).

In what follows, we freely use the fact that any function in $H^2(\D)$ or
$H^2(\D^2)$ admits almost everywhere defined non-tangential boundary values,
and that its norm is equal to the $L^2$ norm on the circle (resp. torus).

Suppose there exists a sequence of polynomials $P_n(z_1,z_2)$ such that
$$
\lim_n \int_0^{2\pi} \int_0^{2\pi}
\left| P_n (\eitu, \eitd) f (\eitu, \eitd) - 1 \right|^2 \,
d \theta_1  d \theta_2 = 0.
$$
Using the change of variables $\theta_1 = \theta$, $\theta_2 = \theta + \alpha$, we get
$$
\lim_n \int_0^{2\pi} 
\left( 
\int_0^{2\pi}
\left| P_n (\eit, e^{i (\theta + \alpha)}) f_0 (e^{i (2\theta + \alpha)}) - 1 \right|^2 \,
d \theta 
\right)
 d \alpha = 0,
$$
which implies that there exists an increasing sequence $n_k$ of integers
such that for almost any $\alpha$, 
$$
\lim_k 
\int_0^{2\pi}
\left| P_{n_k} (\eit, e^{i (\theta + \alpha)}) f_0 (e^{i (2\theta + \alpha)}) - 1 \right|^2 \,
d \theta 
= 0.
$$
For such an $\alpha$, let $g_\alpha (\zeta) := f_0 (e^{i  \alpha} \zeta^2)$.
This is again a singular function, but we have a sequence of polynomials 
$q_k (\zeta) := P_{n_k} (\zeta, e^{i  \alpha} \zeta)$ such that 
$\|q_k g_\alpha - 1 \|_{H^2(\D)} \to 0$, which is a contradiction. 

\end{proof}

\end{document}